\numberwithin{equation}{section}
\newtheorem{theorem}{Theorem}[section]
\newtheorem{corollary}[theorem]{Corollary}
\newtheorem{proposition}[theorem]{Proposition}
\theoremstyle{definition}
\newtheorem{definition}[theorem]{Definition}
\newtheorem{example}[theorem]{Example}
\newenvironment{remark}[1][Remark.]{\begin{trivlist}
\item[\hskip \labelsep {\bfseries #1}]  }{ \end{trivlist}}
\newcommand{\InSec}{\mbox{$\underline{\Sec}$}}
\DeclareMathOperator{\End}{End}
\DeclareMathOperator{\Vect}{Vect}
\font\black=cmbx10 \font\sblack=cmbx7 \font\ssblack=cmbx5 \font\blackital=cmmib10  \skewchar\blackital='177
\font\sblackital=cmmib7 \skewchar\sblackital='177 \font\ssblackital=cmmib5 \skewchar\ssblackital='177
\font\sanss=cmss10 \font\ssanss=cmss8 
\font\sssanss=cmss8 scaled 600 \font\blackboard=msbm10 \font\sblackboard=msbm7 \font\ssblackboard=msbm5
\font\caligr=eusm10 \font\scaligr=eusm7 \font\sscaligr=eusm5  \font\fraktur=eufm10
\font\sfraktur=eufm7 \font\ssfraktur=eufm5 
\font\bsymb=cmsy10 scaled\magstep2
\def\all#1{\setbox0=\hbox{\lower1.5pt\hbox{\bsymb
       \char"38}}\setbox1=\hbox{$_{#1}$} \box0\lower2pt\box1\;}
\def\exi#1{\setbox0=\hbox{\lower1.5pt\hbox{\bsymb \char"39}}
       \setbox1=\hbox{$_{#1}$} \box0\lower2pt\box1\;}
\def\tx#1{{\fam0\relax#1}}
\def\sss#1{{\fam\ssfam\relax#1}}
\def\hpb#1{\setbox0=\hbox{${#1}$}
    \copy0 \kern-\wd0 \kern.2pt \box0}
\def\vpb#1{\setbox0=\hbox{${#1}$}
    \copy0 \kern-\wd0 \raise.08pt \box0}
\def\pmb#1{\setbox0\hbox{${#1}$} \copy0 \kern-\wd0 \kern.2pt \box0}
\def\pmbb#1{\setbox0\hbox{${#1}$} \copy0 \kern-\wd0
      \kern.2pt \copy0 \kern-\wd0 \kern.2pt \box0}
\def\pmbbb#1{\setbox0\hbox{${#1}$} \copy0 \kern-\wd0
      \kern.2pt \copy0 \kern-\wd0 \kern.2pt
    \copy0 \kern-\wd0 \kern.2pt \box0}
\def\pmxb#1{\setbox0\hbox{${#1}$} \copy0 \kern-\wd0
      \kern.2pt \copy0 \kern-\wd0 \kern.2pt
      \copy0 \kern-\wd0 \kern.2pt \copy0 \kern-\wd0 \kern.2pt \box0}
\def\pmxbb#1{\setbox0\hbox{${#1}$} \copy0 \kern-\wd0 \kern.2pt
      \copy0 \kern-\wd0 \kern.2pt
      \copy0 \kern-\wd0 \kern.2pt \copy0 \kern-\wd0 \kern.2pt
      \copy0 \kern-\wd0 \kern.2pt \box0}
\mathchardef\za="710B  
\mathchardef\zb="710C  
\mathchardef\zg="710D  
\mathchardef\zd="710E  
\mathchardef\zve="710F 
\mathchardef\zz="7110  
\mathchardef\zh="7111  
\mathchardef\zvy="7112 
\mathchardef\zi="7113  
\mathchardef\zk="7114  
\mathchardef\zl="7115  
\mathchardef\zm="7116  
\mathchardef\zn="7117  
\mathchardef\zx="7118  
\mathchardef\zp="7119  
\mathchardef\zr="711A  
\mathchardef\zs="711B  
\mathchardef\zt="711C  
\mathchardef\zu="711D  
\mathchardef\zvf="711E 
\mathchardef\zq="711F  
\mathchardef\zc="7120  
\mathchardef\zw="7121  
\mathchardef\ze="7122  
\mathchardef\zy="7123  
\mathchardef\zf="7124  
\mathchardef\zvr="7125 
\mathchardef\zvs="7126 
\mathchardef\zf="7127  
\mathchardef\zG="7000  
\mathchardef\zD="7001  
\mathchardef\zY="7002  
\mathchardef\zL="7003  
\mathchardef\zX="7004  
\mathchardef\zP="7005  
\mathchardef\zS="7006  
\mathchardef\zU="7007  
\mathchardef\zF="7008  
\mathchardef\zW="700A  
\mathchardef\zC="7009  
\newcommand{\be}{\begin{equation}}
\newcommand{\ee}{\end{equation}}
\newcommand{\bea}{\begin{eqnarray}}
\newcommand{\eea}{\end{eqnarray}}
\def\*{{\textstyle *}}
\newcommand{\R}{{\mathbb R}}
\newcommand{\Z}{{\mathbb Z}}
\newcommand{\s}{{\textstyle *}}
\def\Sec{\sss{Sec}}
\def\Vect{\sss{Vect}}
\def\sT{{\sss T}}
\def\xi{\tx{i}}
\def\s*{{\scriptstyle *}}
\def\cC{\mathcal{C}}
\newcommand{\beas}{\begin{eqnarray*}}
\newcommand{\eeas}{\end{eqnarray*}}
\title{The Ternary Structure of Lie Algebroid Connections} 
\author{Andrew James Bruce} 
   \address{Department of Mathematics,
The Computational Foundry,
Swansea University Bay Campus,
Fabian Way,
Swansea, SA1 8EN}  
   \email{andrewjamesbruce@googlemail.com}
   \date{\today}
\begin{document}
 \maketitle
\vspace{-25pt}
\begin{abstract}{\noindent }\\
We examine the abelian heap of linear connections on anchored vector bundles and Lie algebroids. We show how the ternary structure on the set of linear connections `interacts' with the torsion and curvature tensors. The endomorphism truss of linear connections is constructed.\\
\noindent {\Small \textbf{Keywords:} Heaps;~Trusses;~Linear Connections;~Lie Algebroids}\\
\noindent {\small \textbf{MSC 2020:} 20N10;~53D17;~58A50;~53B05 }
\end{abstract}
\smallskip 

\noindent \emph{Dedicated to the memory of Sandra Irene Bruce}

\section{Introduction and Preliminaries}
\subsection{Introduction} We need hardly mention that the notion of a connection in its various forms is of vital importance in differential geometry and geometric approaches to physics (see, for example, \cite{Mangiarotti:2000}).   As an important example of the r\^{o}le of connections in modern mathematics, we point to the construction of characteristic classes of principal bundles via  Chern--Weil theory. In physics, connections are related to gauge fields and are vital in general relativity and other geometric approaches to gravity such as metric-affine gravity. Connections are also found in geometric approaches to relativistic mechanics. In short, connections are found throughout modern geometry and physics.   \par 
In this note, we study linear connections on anchored vector bundles, and especially Lie algebroids in the category of smooth real supermanifolds (in the sense of Berezin \& Le\u{\i}tes, see \cite{Berezin:1975}). Such connections, here formulated following Koszul as covariant derivatives, generalise affine connections on manifolds, and so our results restrict directly to the classical setting.  Lie algebroids are a `mix' of tangent bundles and Lie algebras and offer a wide geometric framework to formulate classical geometric notions (see for example Mackenzie \cite{Mackenzie:2005}).\par
Heaps were first defined and studied by Pr\"{u}fer \cite{Prufer:1924} and  Baer \cite{Baer:1929} as a set equipped with a ternary operation satisfying simple axioms, including a ternary generalisation of associativity. A heap can, loosely, be thought of as a group in which the identity had been discarded. Given some group, the ternary operation $(a,b,c) \mapsto ab^{-1}c$ defines a heap.  For example, in an affine space, one can construct a heap operation as $(u,v,w) \mapsto u - v+w$.  In the other direction, by selecting any element in a heap, one can reduce the ternary operation to a  group operation, such that the chosen element is the identity element. Our main reference for heaps and related structures is the book by Hollings,  \& Lawson \cite{Hollings:2017}, which presents translations of Wagner's original works. \par 
Via the above paragraph, it is clear that, as they form an affine space, the set of affine/linear connections on a (super)manifold/vector bundle form a heap. We investigate the consequences of this heap structure.  One interesting result is the fact that the set of endomorphisms of connections on an anchored vector bundle (and so on a Lie algebroid) comes with the structure of a \emph{truss}. The latter structures are ring-like algebraic structures in which the binary addition is replaced with a heap operation together with some natural distributivity axioms (see \cite{Brzezinski:2018,Brzezinski:2019,Brzezinski:2020,Brzezinski:2019b,Brzezinski:2022}). In part, the output of this note is the construction of a geometric example of a truss.    \par 
We remark that although we are working in supergeometry, none of the results of this note hinges on that fact. We assume the reader has some familiarity with supermanifolds and supervector bundles. However, we will work in a coordinate-free way with globally defined objects. For details of supermanifolds, the reader may consult Carmeli, Caston L. \& Fioresi \cite{Carmeli:2011}, for example. For details of (super) vector bundles, one may consult \cite{Balduzzi:2011}. Local expressions for Lie algebroids can be found in the introductory section of \cite{Bruce:2021}.  We will often neglect the prefix `super' and unless explicitly stated, everything will be $\Z_2$-graded. \par 
\subsection{Algebraic Preliminaries}\label{subsec:AlgPre} A \emph{semiheap} $H$ is a set (possibly empty) equipped with a ternary operation $(a,b,c) \mapsto [a,b,c]$ that is para-associative, i.e.,
$$[[a, b, c], d, e] = [a, [d, c, b], e] = [a, b, [c, d,e]]\, .$$
 A semiheap is said to be \emph{abelian} is $[a,b,c] = [c,b,a]$ for all $a,b$ and $c \in H$. If all the elements are \emph{bi-unitary}, that is, $[a,b,b] = a$ and $[b,b,a]=a$ for all $a$ and $b \in H$, then we have a \emph{heap}. We recall that a \emph{left truss} is an abelian heap together with an associative binary operation that distributes over the ternary operation (from the left), i.e., 
$$a \cdot [b,c,d] = [a \cdot b, a\cdot c , a\cdot d]\,. $$
Similarly, a \emph{right truss} can be defined. If we have both left and right distributivity, then we speak of a \emph{truss}. We divert the reader to the original literature on trusses (see \cite{Brzezinski:2018,Brzezinski:2019,Brzezinski:2020,Brzezinski:2019b,Brzezinski:2022}). A video lecture outlining the theory of trusses, including motivation and a historical perspective is \cite{NorthAtlanticNCGSeminar:2021}.

\section{Heaps of Connections}
\subsection{Connections on Anchored Vector Bundles}
Let us recall the definition of an anchored vector bundle, here adapted to the setting of supergeometry.
\begin{definition}
A vector bundle (in the category of supermanifolds) $\pi : A \rightarrow M$ is said to be an \emph{anchored vector bundle} if it is equipped with a vector bundle homomorphism (over the identity) $\rho: A \rightarrow \sT M$, which is referred to as the \emph{anchor}.
\end{definition}
The space of sections of a vector bundle is $\Z_2$-graded, i.e., $\InSec(A) =  \Sec_0(A)\oplus_{C^\infty(M)} \Sec_1(A)$. By minor abuse of notation, we will also  write
\begin{equation}
\rho :  \InSec(A) \longrightarrow \Vect(M)\,,
\end{equation}
for the associated (even) homomorphism of $C^\infty(M)$-modules. We will, when convenient write $\rho_u := \rho(u)$, where $u \in\InSec(A)$. The Grassmann parity (or degree) of sections (as well as functions, tensors etc.) we denote using `tilde', i.e., $\widetilde{u} \in \Z_2$.
\begin{example}
Let $\tau : E \rightarrow  M$ be a vector bundle over the supermanifold $M$. Then, as fibre products exist in the category of supermanifolds, $A := \sT M \times_M E$ is also a vector bundle over $M$. The anchor is the projection onto the first factor (a little care is needed as we have a locally ringed space, but this can be made sense of using local coordinates, for example).
Sections are clearly $\InSec(A)=  \Vect(M)\bigotimes_{C^\infty(M)} \InSec(E)$. Then, on pure tensor products, $\rho(X\otimes u) =  X$.
\end{example}
\begin{definition}\label{Def:LinCon}
A \emph{linear connection} on an anchored vector bundle $(A, \rho)$ is an $\R$-bilinear map
$$\nabla :  \InSec(A) \times \InSec(A) \longrightarrow \InSec(A)\,,$$
such that 
\begin{enumerate}
\setlength\itemsep{0.5em}
\item $\widetilde{\nabla_u v} =  \widetilde{u} + \widetilde{v}$,
\item $\nabla_{f u}v = f \, \nabla_u v$, and
\item $\nabla_u (f v) =  \rho_u(f)v + (-1)^{\widetilde{u}\, \widetilde{f}}\, f \nabla_u v$,
\end{enumerate}
\smallskip 

for all $f \in C^\infty(M)$  and $u,v \in \InSec(A)$.
\end{definition}
We will denote the set (or affine space) of linear connections on $(A, \rho)$ as $\cC(A)$.
\begin{remark}\
\begin{enumerate}
\setlength\itemsep{0.5em}
\item In this note we only consider Grassmann even connections. Odd connections are not a truly separate notion as uncovered in  \cite{Bruce:2020}.
\item The existence of Lie algebroid connections, and so anchored vector bundles is established in  \cite{Krizka:2008}, for example, using a partition of unity. For real supermanifolds (as locally ringed spaces) we always have partitions of unity.
\item There is the related notion of an $A$-valued connection $\hat{\nabla}: \InSec(A)\times \InSec(E) \rightarrow \InSec(E)$. As we will want to discuss torsion, linear connections as defined above are needed. 
\item Linear connections can be reformulated as odd vector fields on a particular bi-graded supermanifold build from the initial anchored vector bundle, see \cite{Bruce:2019} for details. We will avoid graded/weighted geometry in this note and stick to a more classical presentation. 
\end{enumerate}
\end{remark}
\begin{example}
A real vector space $V$, non-super for simplicity, can be considered as an anchored vector bundle over a single point, where the anchor is the zero map.  A linear connection on $V$ is simply an $\R$-bilinear map $\nabla: V \times V \rightarrow V$.
\end{example}
\begin{example}
Let $\pi : A \rightarrow M$ be an arbitrary vector bundle (in the category of supermanifolds). This vector bundle can be considered as an anchored vector bundle by setting the anchor to be the zero map.  As the zero vector field can be considered as both even and odd, this choice is consistent. We refer to such structures as zero-anchored vector bundles. Then a linear connection on a zero vector bundle is an even $\R$-bilinear map such that 
$$\nabla_{fu}v = (-1)^{\widetilde{u} \widetilde{f}}\, \nabla_u (f v) = f \nabla_u v\,.$$
That is, a linear connection in this context is a bilinear form on the $C^\infty(M)$-module $\InSec(A)$.
\end{example}
\begin{example}
Given a vector bundle connection $\bar{\nabla}: \Vect(M)\times \InSec(A) \rightarrow \InSec(A)$ on $(A, \rho)$, one has a canonically associated linear connection by setting $\nabla_u v := \bar{\nabla}_{\rho(u)} v$.
\end{example}
We define a ternary operation on the set of linear connections on an anchored bundle as
 \begin{equation}\label{eqn:TerOP}
 [\nabla^{(1)}, \nabla^{(2)},  \nabla^{(3)}] := \nabla^{(1)} - \nabla^{(2)}+ \nabla^{(3)}\,,
 \end{equation}
 for arbitrary  $ \nabla^{(i)} \in \cC(A)$. This ternary product does indeed produce another linear connection. Note, of course, that the sum or difference of two linear connections is not a linear connection. The reader can easily verify the following proposition (see Subsection \ref{subsec:AlgPre}).
\begin{proposition}
Let $(A, \rho)$ be an anchored vector bundle. Then the set of linear connections, $\cC(A)$, is an abelian heap with the ternary operation being defined by
\eqref{eqn:TerOP}.
\end{proposition}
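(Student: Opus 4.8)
The plan is to proceed in two stages. First I would verify that the ternary operation \eqref{eqn:TerOP} is well-defined, meaning that $\nabla^{(1)} - \nabla^{(2)} + \nabla^{(3)}$ is again a linear connection in the sense of Definition~\ref{Def:LinCon}; only once this is established do the abelian heap axioms genuinely make sense, and they will then follow on purely formal grounds.

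For well-definedness I would check the three defining properties of Definition~\ref{Def:LinCon} in turn. Properties (1) and (2) are immediate: the Grassmann parity condition and the $C^\infty(M)$-linearity in the first slot are identities preserved under any $\R$-linear combination of the $\nabla^{(i)}$, since each $\nabla^{(i)}$ satisfies them separately. The substantive point is property (3). Setting $\nabla := [\nabla^{(1)}, \nabla^{(2)}, \nabla^{(3)}]$ and expanding $\nabla_u(fv)$, each contribution $\nabla^{(i)}_u(fv)$ produces an anchor term $\rho_u(f)\,v$ together with $(-1)^{\widetilde{u}\,\widetilde{f}} f\, \nabla^{(i)}_u v$. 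Collecting these according to the signs $+,-,+$ of the ternary operation, the anchor terms appear with total coefficient $1-1+1 = 1$, so precisely one copy of $\rho_u(f)\,v$ remains, while the derivative terms reassemble into $(-1)^{\widetilde{u}\,\widetilde{f}} f\, \nabla_u v$. Hence $\nabla$ again obeys the Leibniz rule and $\nabla \in \cC(A)$.

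Once well-definedness is in place, the axioms of Subsection~\ref{subsec:AlgPre} follow formally from the fact that \eqref{eqn:TerOP} has the shape $a - b + c$ inside the $\R$-vector space of all $\R$-bilinear maps $\InSec(A) \times \InSec(A) \to \InSec(A)$. Commutativity of addition gives the abelian identity $[a,b,c] = [c,b,a]$; bi-unitarity is the pair of cancellations $a - b + b = a$ and $b - b + a = a$; and para-associativity amounts to checking that the three bracketings $[[a,b,c],d,e]$, $[a,[d,c,b],e]$, and $[a,b,[c,d,e]]$ all expand to the single expression $a - b + c - d + e$.

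The hard part --- indeed the only part carrying any content --- is the verification of property (3): one must confirm that the non-tensorial anchor contributions collapse to a single term. This is precisely the observation that the coefficients of \eqref{eqn:TerOP} sum to $1$, so that the ternary product is an \emph{affine} combination of connections, which is exactly the condition ensuring one lands back in $\cC(A)$. Everything else is routine bookkeeping of signs.
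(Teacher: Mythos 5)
Your proposal is correct and is precisely the routine verification the paper alludes to when it asserts that the ternary product yields another connection and that ``the reader can easily verify'' the proposition: the paper gives no explicit proof, and your argument (the Leibniz terms cancel because the coefficients $1-1+1$ sum to $1$, after which the heap axioms are formal identities in the ambient space of bilinear maps) is exactly the intended one.
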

From the general theory of heaps, we know that if we fix some connection $\nabla^{(0)} \in \cC(A)$, then we have an associated abelian group structure on $\cC(A)$ given by $\nabla^{(1)}\bullet_{\nabla^{(0)}} \nabla^{(2)} := [\nabla^{(1)}, \nabla^{(0)}, \nabla^{(2)}]$, and the inverse operation is given by $(\nabla)^{-1} := [\nabla^{(0)}, \nabla, \nabla^{(0)}]$.  All such abelian groups associated with a different choice of reference connection are isomorphic.
\begin{example}\label{exp:RieMan}
Let $(M, g)$ be a Riemannian manifold. Then we have the canonical Levi-Civita connection on the tangent bundle $\sT M$, which we denote as $\nabla^{0}$. We consider the tangent bundle as an anchored vector bundle where the anchor is the identity map. Thus the set of affine connections on $(M,g)$ is canonically an abelian group with the group product and inverse being
\begin{align*}
& \nabla^{(1)}\cdot \nabla^{(2)} := [\nabla^{(1)}, \nabla^{0}, \nabla^{(2)}]\,,\\
& (\nabla)^{-1} := [\nabla^{0}, \nabla, \nabla^{0}]\,.
\end{align*}
If $\nabla^{(i)}$ are metric connections, i.e., $\nabla g =0$, then, due to linearity of the metric, $[\nabla^{(1)}, \nabla^{(2)}, \nabla^{(3)}]$ is also a metric connection, that is, metric connections form a subheap of the heap of all affine connections. In turn, we also have a subgroup of metric connections. Moreover, given an arbitrary affine connection, $\nabla$, we note that 
$$\nabla = [\nabla, \nabla^0 , \nabla^0] =   \nabla^0 + (\nabla- \nabla^0)\,,$$
which gives a ``heapy'' origin to the well-known fact that any affine connection on a Riemannian manifold is the Levi-Civita connection plus a tensor of type $(1,2)$.
\end{example}
\begin{remark}
The previous example directly generalises to even and odd Riemannian supermanifolds as we again have a canonical  Levi-Civita connection.
\end{remark}
A section $u \in \InSec(A)$ is said to be \emph{auto-parallel} if there exists a linear connection $\nabla \in \cC(A)$ such that $\nabla_u u = 0$. The set of all such linear connections we denote as $\cC(A,u)$. The following proposition is evident.
\begin{proposition}
Let $(A, \rho)$ be an anchored vector bundle. The set $\cC(A,u)$ for any section $u \in \InSec(A)$, is closed under the ternary operation \eqref{eqn:TerOP}.
\end{proposition}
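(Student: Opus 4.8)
The plan is to observe that the auto-parallel condition is preserved for the simple reason that the ternary operation \eqref{eqn:TerOP} is defined sectionwise, and then to note that this is the \emph{only} thing requiring verification: the preceding proposition (that $\cC(A)$ is an abelian heap) already guarantees that the combination $[\nabla^{(1)}, \nabla^{(2)}, \nabla^{(3)}]$ is a genuine linear connection. So I would reduce the claim to checking that $u$ remains auto-parallel with respect to this combination.

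First I would take three connections $\nabla^{(1)}, \nabla^{(2)}, \nabla^{(3)} \in \cC(A,u)$, so that $\nabla^{(i)}_u u = 0$ for $i = 1,2,3$ by definition of $\cC(A,u)$. Evaluating the heap combination on the pair $(u,u)$ and using that \eqref{eqn:TerOP} amounts to the pointwise combination of covariant derivatives inside the vector space $\InSec(A)$, I would compute
\[
[\nabla^{(1)}, \nabla^{(2)}, \nabla^{(3)}]_u u = \nabla^{(1)}_u u - \nabla^{(2)}_u u + \nabla^{(3)}_u u = 0 - 0 + 0 = 0 \,,
\]
whence $[\nabla^{(1)}, \nabla^{(2)}, \nabla^{(3)}] \in \cC(A,u)$, which is precisely the asserted closure.

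A cleaner conceptual phrasing, which I would include, is that the evaluation map $\mathrm{ev}_u : \cC(A) \longrightarrow \InSec(A)$, $\nabla \mapsto \nabla_u u$, is a morphism of heaps (the target being the abelian heap underlying the vector space $\InSec(A)$ with its operation $a - b + c$). Then $\cC(A,u) = \mathrm{ev}_u^{-1}(\{0\})$ is the preimage of the subheap $\{0\}$, and the preimage of a subheap under a heap morphism is automatically a subheap, giving the result instantly.

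There is essentially no obstacle to overcome here. The only point meriting a word of care is that the map $\mathrm{ev}_u$ is well defined as a heap morphism precisely because the difference of two connections is a $C^\infty(M)$-bilinear tensor, so that $\nabla^{(1)} - \nabla^{(2)} + \nabla^{(3)}$ is again $\R$-bilinear and satisfies the connection axioms; but this is exactly the content already supplied by the proposition that $\cC(A)$ is an abelian heap, so nothing further is needed.
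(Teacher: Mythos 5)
Your proof is correct and is exactly the direct verification the paper has in mind when it declares the proposition ``evident'': evaluating the ternary combination at $(u,u)$ gives $0-0+0=0$, with the heap structure on $\cC(A)$ already guaranteeing the result is a connection. The reformulation via the evaluation map $\mathrm{ev}_u$ as a heap morphism is a nice touch but adds nothing essential beyond the paper's (implicit) argument.
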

\subsection{Torsion and Curvature}
To discuss torsion and curvature in the setting of anchored vector bundles we require a bracket on the space of sections. We focus on the situation of Lie algebroids, though once can relax the Jacobi identity and the compatibility of the anchor and the bracket if desired.  For completeness, we define a Lie algebroid following  Pradines (see \cite{Pradines:1974}), modified to the setting of supergeometry. 
\begin{definition}
An anchored vector bundle $(A, \rho)$ is a \emph{Lie algebroid} if the space of sections $\InSec(A)$ comes equipped with an $\R$-bilinear map - the Lie bracket
$$[-,-] :  \InSec(A)\times \InSec(A) \longrightarrow \InSec(A)\,,$$
that satisfies the following:
\begin{enumerate}
\setlength\itemsep{0.5em}
\item $\widetilde{[u,v]} = \widetilde{u} + \widetilde{v}$,
\item $[u,v] = - (-1)^{\widetilde{u} \widetilde{v}} \, [v,u]$,
\item $[u,f v] = \rho_u(f)v +(-1)^{\widetilde{u} \widetilde{f}}\, f [u,v]$,
\item $[u,[v,w]] = [[u,v],w] + (-1)^{\widetilde{u}\widetilde{v}}\, [v,[u,w]]$,
\end{enumerate}
\smallskip

for all $u,v$ and $w \in \InSec(A)$ and $f \in C^\infty(M)$.
\end{definition}
The above conditions imply that the anchor is a homomorphism of Lie algebras, i.e.,
 $$\rho_{[u,v]} = [\rho(u), \rho(v)]\,.$$
\begin{example}
The tangent bundle of a supermanifold $\sT M$ is a Lie algebroid with the anchor being the identity map and the Lie bracket being the standard commutator of vector fields. 
\end{example}
\begin{example}
Let $\mathfrak{g} = \mathfrak{g}_0 \oplus \mathfrak{g}_1$ be a super vector space. Then associated with this via the `manifoldcation' functor, is the linear supermanifold, here thought of as a vector bundle over a point $\mathfrak{g}^{\textnormal{man}} \rightarrow \star$. The important aspect of the construction is that $\mathfrak{g} \cong \InSec(\mathfrak{g}^{\textnormal{man}})$. If $\mathfrak{g}$ is a Lie algebra, then $\mathfrak{g}^{\textnormal{man}}$ is a Lie algebroid with zero anchor.
\end{example}
General Lie algebroids are, loosely, a mixture of the two above examples. The mantra here is that whatever can be done with the tangent bundle can be done in the setting of Lie algebroids. In particular, we have the notion of torsion. The \emph{torsion tensor} of a linear connection on a Lie algebroid $(A, \rho , [-,-])$ is given by
\begin{equation}
T_\nabla(u,v) := \nabla_u v - (-1)^{\widetilde{u} \widetilde{v}}\, \nabla_v u - [u,v]\,,
\end{equation}
for all $u,v \in \InSec(A)$.  
\begin{proposition}\label{prop:Torsion}
Let $\nabla^{(1)}, \nabla^{(2)}$ and $\nabla^{(3)}\in \mathcal{C}(A)$ be  connections on a Lie algebroid $(A, \rho, [-,-])$. Then
$$T_{[\nabla^{(1)}, \nabla^{(2)}, \nabla^{(3)}]} = T_{\nabla^{(1)}} - T_{\nabla^{(2)}} + T_{\nabla^{(3)}}\,.$$
\end{proposition}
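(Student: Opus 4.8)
The plan is to unwind both the ternary operation \eqref{eqn:TerOP} and the definition of the torsion tensor, and then verify the identity by direct substitution. Setting $\nabla := [\nabla^{(1)}, \nabla^{(2)}, \nabla^{(3)}]$, the defining relation \eqref{eqn:TerOP} means precisely that $\nabla_u v = \nabla^{(1)}_u v - \nabla^{(2)}_u v + \nabla^{(3)}_u v$ for all $u, v \in \InSec(A)$. Feeding this into the torsion formula gives
\[
T_\nabla(u,v) = \bigl(\nabla^{(1)}_u v - \nabla^{(2)}_u v + \nabla^{(3)}_u v\bigr) - (-1)^{\widetilde{u}\widetilde{v}}\bigl(\nabla^{(1)}_v u - \nabla^{(2)}_v u + \nabla^{(3)}_v u\bigr) - [u,v].
\]

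I would then expand the right-hand side of the claimed identity, $T_{\nabla^{(1)}} - T_{\nabla^{(2)}} + T_{\nabla^{(3)}}$, using the definition of each summand. The covariant-derivative pieces are linear in the connection and reassemble directly into the two parenthesised expressions above. The crucial bookkeeping concerns the Lie bracket: each $T_{\nabla^{(i)}}$ carries one inhomogeneous term $-[u,v]$, so the combination contributes these with coefficients $+1, -1, +1$, leaving exactly one copy $-[u,v]$. Matching term by term against the expression for $T_\nabla(u,v)$ then closes the argument.

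The conceptual content is that $\nabla \mapsto T_\nabla$ is an \emph{affine} map of connections: it is the sum of a connection-linear part (the graded antisymmetrisation of the covariant derivative) and a fixed inhomogeneous term $-[u,v]$ independent of $\nabla$. The heap operation \eqref{eqn:TerOP} is an affine combination whose coefficients sum to $1 - 1 + 1 = 1$, and any affine map respects such combinations; this is exactly why the bracket term, although not linear in $\nabla$, does not obstruct the identity. I anticipate no genuine difficulty here. The only point demanding care is the coefficient bookkeeping for the $[u,v]$ terms, and the Grassmann sign $(-1)^{\widetilde{u}\widetilde{v}}$ is carried uniformly through all three torsion tensors, so it plays no special role.
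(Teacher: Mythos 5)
Your proposal is correct and follows essentially the same route as the paper: expand $[\nabla^{(1)},\nabla^{(2)},\nabla^{(3)}]_u v$ via \eqref{eqn:TerOP}, substitute into the torsion formula, and observe that the three $-[u,v]$ terms combine with coefficients $+1,-1,+1$ to leave a single copy. The affine-map remark is a pleasant conceptual gloss but the underlying computation is identical to the paper's direct verification.
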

\begin{proof}
Directly, given any $u,v \in \InSec(A)$,
\begin{align*}
T_{[\nabla^{(1)}, \nabla^{(2)}, \nabla^{(3)}]}(u,v)&= [\nabla^{(1)}, \nabla^{(2)}, \nabla^{(3)}]_u v - (-1)^{\widetilde{u}  \widetilde{v}}\,[\nabla^{(1)}, \nabla^{(2)}, \nabla^{(3)} ]_v u - [u,v]\\
& = \nabla^{(1)}_u v - (-1)^{\widetilde{u}  \widetilde{v}}\,\nabla^{(1)}_v u - [u,v]\\
& -\nabla^{(2)}_u v + (-1)^{\widetilde{u}  \widetilde{v}}\,\nabla^{(2)}_v u + [u,v]\\
& + \nabla^{(3)}_u v - (-1)^{\widetilde{u}  \widetilde{v}}\,\nabla^{(3)}_u v - [u,v] \\
& = T_{\nabla^{(1)}}(u,v) - T_{\nabla^{(2)}}(u,v) + T_{\nabla^{(3)}}(u,v)\,.
\end{align*}
\end{proof}
\begin{remark}
The above proposition shows that torsion can be considered as a heap homomorphism from the heap of connections to the heap of vector-valued two forms.
\end{remark}
\begin{corollary}\
\begin{enumerate}
\setlength\itemsep{0.5em}
\item The subset of torsion-free connections $\cC_{TF}(A) \subset \cC(A)$ forms an abelian subheap of $(\cC(A), [-,-,-])$.
\item $T_{[\nabla^{(1)}, \nabla^{(2)}, \nabla^{(3)}]} + T_{[\nabla^{(3)}, \nabla^{(1)}, \nabla^{(2)}]} + T_{[\nabla^{(2)}, \nabla^{(3)}, \nabla^{(1)}]} = T_{\nabla^{(1)}} + T_{\nabla^{(2)}} + T_{\nabla^{(3)}}$.
\end{enumerate} 
\end{corollary}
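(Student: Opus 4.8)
The plan is to read off both statements from Proposition \ref{prop:Torsion}, which already establishes that the assignment $\nabla \mapsto T_\nabla$ is a homomorphism of abelian heaps (as recorded in the preceding remark). Once this homomorphism property is available, neither part requires a genuinely new computation; the work is entirely bookkeeping within the heap structure of $\cC(A)$ and the target heap of vector-valued two-forms.

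For part (1) I would observe that $\cC_{TF}(A)$ is exactly the fibre over the zero tensor of the heap homomorphism $T$. Since the ambient $(\cC(A),[-,-,-])$ is an abelian heap, any subset that is closed under the ternary operation is automatically an abelian subheap, para-associativity, bi-unitarity and commutativity all being inherited (and, by the convention of Subsection \ref{subsec:AlgPre}, the empty case is permitted). Closure is immediate from Proposition \ref{prop:Torsion}: if $T_{\nabla^{(i)}}=0$ for $i=1,2,3$, then $T_{[\nabla^{(1)},\nabla^{(2)},\nabla^{(3)}]} = 0-0+0 = 0$, so the ternary product is again torsion-free. To confirm that the subheap is genuinely populated rather than vacuous, I would exhibit a torsion-free connection explicitly by symmetrising an arbitrary $\nabla\in\cC(A)$: setting $\nabla'_u v := \nabla_u v - \tfrac{1}{2}\,T_\nabla(u,v)$ yields a connection (the torsion is tensorial, so the difference of a connection and $\tfrac12 T_\nabla$ is again a connection), and a short check using the graded antisymmetry $(-1)^{\widetilde{u}\widetilde{v}}T_\nabla(v,u) = -\,T_\nabla(u,v)$ of the torsion gives $T_{\nabla'}=0$.

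For part (2) I would simply apply Proposition \ref{prop:Torsion} to each of the three cyclically permuted ternary products,
\begin{align*}
T_{[\nabla^{(1)},\nabla^{(2)},\nabla^{(3)}]} &= T_{\nabla^{(1)}} - T_{\nabla^{(2)}} + T_{\nabla^{(3)}}\,,\\
T_{[\nabla^{(3)},\nabla^{(1)},\nabla^{(2)}]} &= T_{\nabla^{(3)}} - T_{\nabla^{(1)}} + T_{\nabla^{(2)}}\,,\\
T_{[\nabla^{(2)},\nabla^{(3)},\nabla^{(1)}]} &= T_{\nabla^{(2)}} - T_{\nabla^{(3)}} + T_{\nabla^{(1)}}\,,
\end{align*}
and then add the three identities. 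For each index $i$ the coefficients of $T_{\nabla^{(i)}}$ across the three lines are $+,-,+$ in some order, which sum to $+1$, so the right-hand side collapses to $T_{\nabla^{(1)}} + T_{\nabla^{(2)}} + T_{\nabla^{(3)}}$, as required.

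Because both claims are consequences of a single already-proved proposition, I do not expect any serious obstacle. The only point that calls for care is the non-triviality assertion in part (1), and even that is dispatched by the one-line symmetrisation above (which uses only that we may divide by $2$ over $\R$); everything else is routine manipulation in the abelian heap.
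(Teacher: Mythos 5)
Your proposal is correct and follows exactly the route the paper intends: the corollary is stated without proof precisely because both parts are immediate from Proposition \ref{prop:Torsion}, via closure of the zero fibre of the torsion map for part (1) and summing the three cyclic permutations for part (2). The extra symmetrisation argument showing $\cC_{TF}(A)$ is nonempty is a nice touch but not required by the statement (and the paper's conventions explicitly permit empty semiheaps).
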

\begin{example} Continuing Example \ref{exp:RieMan}, the torsion-free connections on a Riemannian manifold $(M,g)$, form an abelian subheap of the abelian heap of all affine connections. Moreover, the set of torsion-free connections forms canonically comes with an abelian group structure. 
\end{example}
Following Brzeziński, from Definition 2.9 and Proposition 2.10 of \cite{Brzezinski:2020}, we know that there is a subheap relation $\sim_{\cC_{TF}(A)}$ on $\cC(A)$ defined by $\nabla^{(1)} \sim_{\cC_{TF}(A)}\nabla^{(2)}$ if there exists a $\nabla \in \cC_{TF}(A)$ such that $[\nabla^{(1)}, \nabla^{(2)}, \nabla] \in \cC_{TF}(A) $. In fact, if two connections are equivalent, then $[\nabla^{(1)}, \nabla^{(2)}, \nabla']\in \cC_{TF}(A)$ for all $\nabla'\in \cC_{TF}(A)$. It is known that such a relation defines an equivalence relation. \par 
We then observe from Proposition \ref{prop:Torsion}, assuming we have equivalent connections, $T_{[\nabla^{(1)}, \nabla^{(2)}, \nabla]} = T_{\nabla^{(1)}} - T_{\nabla^{(2)}} = 0$. Thus, the torsion tensors for two equivalent connections must be equal. In other words, connections with the same torsion are representatives of the same equivalence class within the heap of linear connections.

The \emph{curvature tensor} of a connection on a Lie algebroid is given by
\begin{equation}
R_\nabla(u,v)w := \nabla_u \nabla_v w - (-1)^{\widetilde{u} \widetilde{v}}\, \nabla_v \nabla_u w - \nabla_{[u,v]}w\, ,
\end{equation}
for arbitrary $u,v$ and $w \in \InSec(A)$.
\begin{proposition}
Let $\nabla^{(1)}, \nabla^{(2)}$ and $\nabla^{(3)}\in \mathcal{C}(A)$ be connections on a Lie algebroid $(A, \rho, [-,-])$. Then
\begin{align*}
R_{[\nabla^{(1)}, \nabla^{(2)}, \nabla^{(3)}]}(u,v)w &= R_{\nabla^{(1)}}(u,v)w + R_{\nabla^{(2)}}(u,v)w + R_{\nabla^{(3)}}(u,v)w\\
&+ 2 \nabla^{(2)}_{[u,v]}w + \sum_{i \neq j}(-1)^{ij}[\nabla^{(i)}_u, \nabla^{(j)}_v]w\,,
\end{align*}
for arbitrary $u,v$ and $w \in \InSec(A)$.
\end{proposition}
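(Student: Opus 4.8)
The plan is to treat curvature as a quadratic expression in the connection and expand it bilinearly. Writing $\nabla := [\nabla^{(1)}, \nabla^{(2)}, \nabla^{(3)}] = \sum_{i=1}^{3}\epsilon_i\, \nabla^{(i)}$ with structure signs $(\epsilon_1, \epsilon_2, \epsilon_3) = (1,-1,1)$, so that $\nabla_u = \sum_i \epsilon_i\, \nabla^{(i)}_u$, I would substitute this into the definition of $R_\nabla(u,v)w$ and split the result into its linear part $-\nabla_{[u,v]}w$ and its quadratic part $\nabla_u \nabla_v w - (-1)^{\widetilde{u}\widetilde{v}}\,\nabla_v \nabla_u w$. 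The point is that the map $(\nabla', \nabla'') \mapsto \nabla'_u \nabla''_v w$ is bilinear, so the quadratic part expands as a genuine double sum over the index set $\{1,2,3\}$.

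First I would dispatch the linear part, which is honestly linear in the ternary operation and contributes $-\sum_i \epsilon_i\, \nabla^{(i)}_{[u,v]}w$. Next, expanding the quadratic part gives $\sum_{i,j}\epsilon_i\epsilon_j\big(\nabla^{(i)}_u \nabla^{(j)}_v w - (-1)^{\widetilde{u}\widetilde{v}}\,\nabla^{(i)}_v \nabla^{(j)}_u w\big)$, which I would separate into diagonal ($i=j$) and off-diagonal ($i \neq j$) contributions. Because $\epsilon_i^2 = 1$, each diagonal summand reassembles precisely into $R_{\nabla^{(i)}}(u,v)w + \nabla^{(i)}_{[u,v]}w$, so the diagonal part yields $\sum_i R_{\nabla^{(i)}}(u,v)w + \sum_i \nabla^{(i)}_{[u,v]}w$.

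The key bookkeeping step is then to combine the diagonal anchor terms $\sum_i \nabla^{(i)}_{[u,v]}w$ with the overall linear term $-\sum_i \epsilon_i\, \nabla^{(i)}_{[u,v]}w$: the coefficient of $\nabla^{(i)}_{[u,v]}w$ is $1 - \epsilon_i$, which vanishes for $i=1,3$ and equals $2$ for $i=2$, leaving exactly $2\,\nabla^{(2)}_{[u,v]}w$. This isolates the three individual curvatures together with the distinguished anchor term. Finally I would organise the off-diagonal summands: pairing the ordered index pair $(i,j)$ with $(j,i)$ and using $\epsilon_i\epsilon_j = \epsilon_j\epsilon_i$, the four resulting terms regroup into two graded commutators $[\nabla^{(i)}_u, \nabla^{(j)}_v]w := \nabla^{(i)}_u \nabla^{(j)}_v w - (-1)^{\widetilde{u}\widetilde{v}}\,\nabla^{(j)}_v \nabla^{(i)}_u w$, each weighted by $\epsilon_i\epsilon_j$, the product of the two structure signs that realises the sign coefficient appearing in the statement. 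Summing over all distinct ordered pairs gives the asserted cross-term $\sum_{i\neq j}\epsilon_i\epsilon_j\,[\nabla^{(i)}_u, \nabla^{(j)}_v]w$.

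The main obstacle is purely the sign bookkeeping in the $\Z_2$-graded setting: one must keep the factor $(-1)^{\widetilde{u}\widetilde{v}}$ attached to the correct operator ordering throughout the expansion, and check that regrouping the off-diagonal double sum into graded commutators does not disturb these signs. No deeper geometric input is required — the identity is a formal consequence of the bilinearity of the composition $\nabla_u \nabla_v$, the relation $\epsilon_i^2 = 1$, and the alternating pattern of the signs $\epsilon_i$, so once the diagonal/off-diagonal split is set up the verification is a finite and essentially mechanical calculation.
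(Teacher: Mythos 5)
Your proposal is correct and follows essentially the same route as the paper: a direct bilinear expansion of $\nabla_u = \sum_i \epsilon_i \nabla^{(i)}_u$ inside the curvature formula, with the diagonal terms reassembling into the individual curvatures, the mismatch between the diagonal anchor terms and the linear term $-\nabla_{[u,v]}w$ producing $2\nabla^{(2)}_{[u,v]}w$, and the off-diagonal terms regrouping into graded commutators weighted by $\epsilon_i\epsilon_j$ (which is how the paper's sign $(-1)^{ij}$ must be read, as its own consistency check confirms). Your systematic $\epsilon_i$ bookkeeping is in fact cleaner than the paper's displayed computation, which contains a garbled middle line, but the argument is the same.
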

\begin{proof}
Via direct calculation 
\begin{align*}
R_{[\nabla^{(1)}, \nabla^{(2)}, \nabla^{(3)}]}(u,v)w &= [\nabla^{(1)}_u , \nabla^{(2)}_u, \nabla^{(3)}_u][\nabla^{(1)}_v , \nabla^{(2)}_v, \nabla^{(3)}_v]w\\
& - (-1)^{\widetilde{u} \widetilde{v}}\,  [\nabla^{(1)}_v , \nabla^{(2)}_v, \nabla^{(3)}_v][\nabla^{(1)}_u , \nabla^{(2)}_u, \nabla^{(3)}_u]w
- \big[\nabla^{(1)}_{[u,v]}, \nabla^{(2)}_{[u,v]}, \nabla^{(3)}_{[u,v]}\big]w\\
&= \big[\nabla^{(1)}_u , \nabla^{(1)}_v \big]w - \nabla^{(1)}_{[u,v]}w
+\big[\nabla^{(2)}_u , \nabla^{(2)}_v \big]w + \nabla^{(2)}_{[u,v]}w \nabla^{(2)}_{[u,v]}w-\nabla^{(2)}_{[u,v]}w \\
&+ \big[\nabla^{(3)}_u , \nabla^{(3)}_v \big]w - \nabla^{(3)}_{[u,v]}w
+\sum_{i \neq j}(-1)^{ij}[\nabla^{(i)}_u, \nabla^{(j)}_v]w\\
&= R_{\nabla^{(1)}}(u,v)w + R_{\nabla^{(2)}}(u,v)w + R_{\nabla^{(3)}}(u,v)w\\
&+ 2 \nabla^{(2)}_{[u,v]}w + \sum_{i \neq j}(-1)^{ij}[\nabla^{(i)}_u, \nabla^{(j)}_v]w\,.
\end{align*}
\end{proof}
Setting $\nabla = \nabla^{(1)} = \nabla^{(2)} = \nabla^{(3)}$ allows for a quick check: 
$$R_\nabla(u,v)w = 3 R\nabla(u,v)w + 2 \nabla_{[u,v]}w - 2 [\nabla_u , \nabla_v]w=  3 R_\nabla(u,v)w - 2  R_\nabla(u,v)w=  R_\nabla(u,v)w\,,$$
and so the proposition is consistent. 

\begin{corollary}
Let $\nabla^{(i)}\in \mathcal{C}(A)$ be flat connections, then
$$R_{[\nabla^{(1)}, \nabla^{(2)}, \nabla^{(3)}]}(u,v)w = 2 \nabla^{(2)}_{[u,v]}w + \sum_{i \neq j}(-1)^{ij}[\nabla^{(i)}_u, \nabla^{(j)}_v]w\,.$$
Specifically, the connection $[\nabla^{(1)}, \nabla^{(2)}, \nabla^{(3)}]$ need not be flat.
\end{corollary}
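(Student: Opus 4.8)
The first assertion requires no real work: the preceding Proposition writes $R_{[\nabla^{(1)},\nabla^{(2)},\nabla^{(3)}]}(u,v)w$ as $R_{\nabla^{(1)}}(u,v)w + R_{\nabla^{(2)}}(u,v)w + R_{\nabla^{(3)}}(u,v)w$ plus the two ``anomalous'' terms $2\nabla^{(2)}_{[u,v]}w$ and $\sum_{i\neq j}(-1)^{ij}[\nabla^{(i)}_u,\nabla^{(j)}_v]w$. Since each $\nabla^{(i)}$ is flat, $R_{\nabla^{(i)}}=0$, so the three curvature terms drop out and the displayed identity remains. Thus the interesting content is the final sentence, that the surviving right-hand side need not vanish.

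The plan for that is to exhibit an explicit triple of flat connections witnessing non-flatness, and the most economical setting is a Lie algebra regarded as a Lie algebroid over a point with zero anchor (as in the Example above), where $\InSec(A)$ is just the underlying vector space and a connection is nothing but an $\R$-bilinear map. Choosing the bracket to be \emph{abelian} buys two simplifications at once: the term $2\nabla^{(2)}_{[u,v]}w$ vanishes identically, and flatness of a connection $\nabla$ collapses to the commuting condition $[\nabla_u,\nabla_v]=0$ for all $u,v$. It then suffices to produce three families of pairwise-commuting operators whose sign-weighted \emph{cross}-commutators do not cancel.

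Concretely I would take $V=\R^2$, set $\nabla^{(2)}=0$, and let $\nabla^{(1)}$ be supported only in the $e_1$-direction and $\nabla^{(3)}$ only in the $e_2$-direction, say $\nabla^{(1)}_{e_1}=P$, $\nabla^{(1)}_{e_2}=0$, $\nabla^{(3)}_{e_1}=0$, $\nabla^{(3)}_{e_2}=S$. Each connection is then automatically flat, since its operators are scalar multiples of a single matrix and so commute. With $\nabla^{(2)}=0$, every term in $\sum_{i\neq j}(-1)^{ij}[\nabla^{(i)}_u,\nabla^{(j)}_v]$ carrying the index $2$ dies, leaving $-[\nabla^{(1)}_u,\nabla^{(3)}_v]-[\nabla^{(3)}_u,\nabla^{(1)}_v]$, which on $(u,v)=(e_1,e_2)$ equals $-[P,S]$. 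Taking $P$ strictly upper-triangular and $S$ strictly lower-triangular makes $[P,S]\neq 0$, so $R_{[\nabla^{(1)},\nabla^{(2)},\nabla^{(3)}]}\neq 0$.

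The only obstacle is book-keeping: one must confirm that each $\nabla^{(i)}$ really is flat while checking that the signs $(-1)^{ij}$ do not conspire to annihilate the one surviving commutator. The ``support in a single direction'' device settles flatness for free and reduces the non-vanishing question to a single ordinary matrix commutator, so no super-sign subtleties intervene (the example being purely even).
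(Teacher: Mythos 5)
Your derivation of the displayed formula is exactly what the paper intends: the corollary is stated without proof because it follows from the preceding proposition simply by deleting the three curvature terms $R_{\nabla^{(i)}}=0$. Where you genuinely go beyond the paper is the final claim. The paper asserts that $[\nabla^{(1)},\nabla^{(2)},\nabla^{(3)}]$ ``need not be flat'' and draws the conclusion that flat connections do not form a subheap, but it never exhibits a witness; you do. Your example is correct and well chosen: over a point with zero anchor and abelian bracket a connection is just a bilinear map $V\times V\to V$, the term $2\nabla^{(2)}_{[u,v]}w$ dies, and flatness reduces to commutativity of the operators $\nabla_u$. With $\nabla^{(2)}=0$, $\nabla^{(1)}_{ae_1+be_2}=aP$ and $\nabla^{(3)}_{ae_1+be_2}=bS$, each connection is flat (all its operators are scalar multiples of one matrix), while $[\nabla^{(1)},\nabla^{(2)},\nabla^{(3)}]$ has operators $aP+bS$ and curvature $(ad-bc)[P,S]\neq 0$ for, say, $P=E_{12}$, $S=E_{21}$. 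So your construction settles a point the paper leaves implicit. One small caution: the sign $(-1)^{ij}$ in the proposition is almost certainly a typo for $(-1)^{i+j}$ (the signs coming from expanding $(\nabla^{(1)}-\nabla^{(2)}+\nabla^{(3)})$ are $\epsilon_i\epsilon_j$ with $\epsilon_2=-1$, which matches $(-1)^{i+j}$, not $(-1)^{ij}$); your surviving term is then $+[P,S]$ rather than $-[P,S]$, but this overall sign is immaterial to the non-vanishing conclusion.
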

The above corollary tells us that, in, general, the heap structure on connections on a Lie algebroid does \emph{not} close on the subset of flat connections. 

\subsection{The Endomorphism Truss of Connections}
\begin{definition}
Let $(A, \rho)$   be an anchored vector bundle. Then an \emph{anchored vector bundle endomorphism} is a vector bundle map (over the identity) $\phi : A \rightarrow A$ that preserves the anchor, i.e., $\rho = \rho \circ \phi$. Furthermore, if $(A, \rho, [-,-])$ is a Lie algebroid, then an anchored vector bundle endomorphism is a \emph{Lie algebroid endomorphism} if $\phi[u,v] =  [\phi(u), \phi(v)]$ for all $u,v \in \InSec(A)$.
\end{definition}
For this subsection we need only discuss anchored vector bundles, there is no real change at all when extending to Lie algebroids. In particular, the Lie bracket plays no r\^ole in the following.
\begin{definition}
Let $(A,  \rho)$ be an anchored vector bundle. An \emph{endomorphism of the set of connections} $\Phi =(\phi , \omega): \cC(A) \rightarrow \cC(A)$ consists of two parts:
\begin{enumerate}
\item an anchored vector bundle endomorphism $\phi : A \rightarrow A$,
\item an even $(1,2)$-tensor, thought of as a $C^\infty(M)$-linear map $\omega : \InSec(A)\times \InSec(A) \rightarrow \InSec(A)$,
\end{enumerate}
and defined as
$$(\Phi \nabla)_u v :=  \nabla_{\phi(u)}v + \omega(u,v)\,,$$
for arbitrary $u,v\in \InSec(A)$. Composition is defined in an obvious way, i.e., if $\Phi = (\phi , \omega)$ and $\Phi' = (\phi' , \omega')$, then  $\Phi \circ \Phi' := (\phi \circ \phi', \omega + \omega')$. The monoid of endomorphisms of connections we denote as $\End(\cC(A))$.
\end{definition}
For completeness, we explicitly check that $\Phi\nabla$ is indeed a connection on $(A, \rho)$.  First, $\Phi\nabla$ is (Grassmann) even. Second,
$$(\Phi\nabla)_{fu}v = \nabla_{\phi(fu)}v + \omega(fu,v) = \nabla_{f \phi( u)}v + f \,\omega(u,v) = f\big( \nabla_{\phi(u)}v + \omega(u,v)\big)\,. $$
Thirdly, 
\begin{align*}
 (\Phi\nabla)_{u}fv &=\nabla_{\phi(u)}fv + \omega(u,fv) = \rho_{\phi(u)}(f) \,v + (-1)^{\widetilde{u}\widetilde{f}} f\, \big(\nabla_{\phi(u)}v + \omega(u,v) \big)\\ 
 &= \rho_{u}(f)\,v + (-1)^{\widetilde{u}\widetilde{f}}f\, \big(\nabla_{\phi(u)}v + \omega(u,v) \big) \,.
\end{align*}
Thus, comparing with Definition \ref{Def:LinCon}, we see that $\Phi\nabla$ is a linear connection. \par
\begin{proposition}
Let $(A, \rho)$ be an anchored vector bundle. Then $\End(\cC(A))$ is a truss. 
\end{proposition}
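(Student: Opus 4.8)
The plan is to exhibit the two ingredients of a truss — an abelian heap operation and an associative binary operation — and then to check the two distributive laws. First I would fix the heap operation on $\End(\cC(A))$ to be the componentwise one
\[
[\Phi^{(1)}, \Phi^{(2)}, \Phi^{(3)}] := \big(\phi^{(1)} - \phi^{(2)} + \phi^{(3)}, \; \omega^{(1)} - \omega^{(2)} + \omega^{(3)}\big)\,,
\]
for $\Phi^{(i)} = (\phi^{(i)}, \omega^{(i)})$, and verify that it lands back in $\End(\cC(A))$. The even $(1,2)$-tensors form a $C^\infty(M)$-module, so $\omega^{(1)} - \omega^{(2)} + \omega^{(3)}$ is again such a tensor; for the bundle-map part, anchor preservation is stable under $(a,b,c)\mapsto a-b+c$ since $\rho \circ (\phi^{(1)} - \phi^{(2)} + \phi^{(3)}) = \rho - \rho + \rho = \rho$, so the anchor-preserving endomorphisms form an affine subspace of $\End(A)$ and hence a subheap. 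I would also record that this componentwise operation coincides with the pointwise heap induced on maps $\cC(A)\to\cC(A)$ by the abelian heap $\cC(A)$: since $\nabla$ is $\R$-linear in its anchor slot one has $\nabla_{\phi^{(1)}(u)}v - \nabla_{\phi^{(2)}(u)}v + \nabla_{\phi^{(3)}(u)}v = \nabla_{(\phi^{(1)} - \phi^{(2)} + \phi^{(3)})(u)}v$, while the $\omega$-parts simply add. Para-associativity, bi-unitarity and symmetry under interchange of the outer arguments are then inherited slot-by-slot from the underlying abelian heap operation $(a,b,c)\mapsto a - b + c$ on each component, so this is an abelian heap.

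For the binary operation I would take the composition already defined, $\Phi \circ \Phi' = (\phi \circ \phi', \omega + \omega')$. This is associative, being the product of the associative composition of bundle endomorphisms with the associative addition of tensors, and it has unit $(\mathrm{id}_A, 0)$; thus $\End(\cC(A))$ is a monoid.

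It then remains to check distributivity from both sides, which I would do componentwise. For left distributivity, comparing $\Phi \circ [\Phi^{(1)}, \Phi^{(2)}, \Phi^{(3)}]$ with $[\Phi \circ \Phi^{(1)}, \Phi \circ \Phi^{(2)}, \Phi \circ \Phi^{(3)}]$, the bundle-map slot amounts to $\phi \circ (\phi^{(1)} - \phi^{(2)} + \phi^{(3)}) = \phi \circ \phi^{(1)} - \phi \circ \phi^{(2)} + \phi \circ \phi^{(3)}$, which holds because $\phi$ is fibrewise linear hence additive, and the tensor slot collapses to the group identity $\omega + (\omega^{(1)} - \omega^{(2)} + \omega^{(3)}) = (\omega + \omega^{(1)}) - (\omega + \omega^{(2)}) + (\omega + \omega^{(3)})$. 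Right distributivity is identical in spirit: the bundle-map slot is $(\phi^{(1)} - \phi^{(2)} + \phi^{(3)}) \circ \phi = \phi^{(1)} \circ \phi - \phi^{(2)} \circ \phi + \phi^{(3)} \circ \phi$, valid for arbitrary maps because postcomposition commutes with the pointwise operation on the target, and the tensor slot again reduces to a group identity. Together these give a truss.

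The only point requiring care is the first step: identifying the heap operation correctly and confirming it closes inside $\End(\cC(A))$ — in particular that $\phi^{(1)} - \phi^{(2)} + \phi^{(3)}$ still preserves the anchor and that the componentwise rule genuinely reproduces the action-induced heap. After that, associativity and the two distributive laws are routine consequences of fibrewise linearity of bundle maps together with the additive structure on the tensor component, so I anticipate no serious obstacle.
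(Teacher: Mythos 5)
Your proof is correct and follows essentially the same route as the paper: the heap operation you define componentwise agrees with the paper's evaluation-based definition $[\Phi^{(1)},\Phi^{(2)},\Phi^{(3)}]\nabla := [\Phi^{(1)}\nabla,\Phi^{(2)}\nabla,\Phi^{(3)}\nabla]$, and both arguments then verify the distributive laws by direct computation with the given composition $(\phi\circ\phi',\omega+\omega')$. Your added checks (anchor preservation of $\phi^{(1)}-\phi^{(2)}+\phi^{(3)}$, the unit $(\mathrm{id}_A,0)$) are welcome details the paper leaves implicit.
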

\begin{proof}
This follows from general facts about endomorphisms of abelian heaps, see \cite[Section 3.7]{Brzezinski:2020}. We explicitly construct $[\Phi^{(1)}, \Phi^{(2)}, \Phi^{(3)}]$  via its evaluation. Specifically,
\begin{align*}
\big([\Phi^{(1)}, \Phi^{(2)}, \Phi^{(3)}]\nabla\big)_u v & := [\Phi^{(1)}\nabla, \Phi^{(2)}\nabla, \Phi^{(3)}\nabla]_u v \\
&= \nabla_{\phi^{(1)}u}v - \nabla_{\phi^{(2)}u}v +\nabla_{\phi^{(3)}u}v\\
&+ \omega^{(1)}(u,v)- \omega^{(2)}(u,v) + \omega^{(3)}(u,v)\,.
\end{align*}
Thus, we have the structure of an abelian heap on $\End(\cC(A))$. The binary product is given as 
$$\big( \Phi \circ \Phi' \nabla \big)_u v := \nabla_{\phi \phi'u}v + \omega(u,v) + \omega'(u,v )\,.$$
Defining $\Phi \circ [\Phi^{(1)}, \Phi^{(2)}, \Phi^{(3)}] := [\Phi \circ\Phi^{(1)}, \Phi \circ\Phi^{(2)}, \Phi\circ\Phi^{(3)}]$, we have the left distributivity property. Explicitly,
\begin{align*}
\big(\Phi \circ[\Phi^{(1)}, \Phi^{(2)}, \Phi^{(3)}]\nabla\big)_u v & := [\Phi \Phi^{(1)}\nabla, \Phi\Phi^{(2)}\nabla, \Phi \Phi^{(3)}\nabla]_u v\\
&= \nabla_{\phi \phi^{(1)}u}v - \nabla_{\phi \phi^{(2)}u}v +\nabla_{\phi\phi^{(3)}u}v\\
&+\big( \omega^{(1)}(u,v) + \omega(u,v)\big)- \big(\omega^{(2)}(u,v) + \omega(u,v)\big)\\ 
& + \big(\omega^{(3)}(u,v) + \omega(u,v)\big)\,.
\end{align*}
The right distributivity property follows similarly. 
\end{proof}
\begin{example}\label{exp:ShiftAffConn}
Consider the tangent bundle $\sT M$ of a supermanifold. As a Lie algebroid, the anchor is the trivial map and the Lie bracket is the standard Lie bracket of vector fields. The set of affine connections we denote as $\cC(M)$. We fix $\phi : \sT M \rightarrow \sT M$ to be the identity map. Then any even $(1,2)$-tensor defines a ``restricted'' endomorphism of $\cC(M)$ or a ``general shift'' via
$$\nabla_X Y \longmapsto \nabla_X Y + \omega(X,Y)\,.$$
The truss structure is defined as 
$$[\omega^{(1)}, \omega^{(2)}, \omega^{(3)}] := \omega^{(1)} -\omega^{(2)} +\omega^{(3)}\,, $$
and
the binary product being addition of tensors, i.e., $\omega \circ \omega' = \omega+ \omega'$. The distributivity of the binary product over the ternary product is evident, i.e.,
$$\omega \circ [\omega^{(1)}, \omega^{(2)}, \omega^{(3)}] := \big( \omega^{(1)} + \omega\big)- \big(\omega^{(2)} + \omega\big) + \big(\omega^{(3)} + \omega\big)\,,$$
and similarly for the right distributivity. 
\end{example}
\begin{remark}
General shifts in affine connections, see Example \ref{exp:ShiftAffConn}, are well-known in the literature and are studied in the context of metric-affine gravity (MAG) (see for example, \cite{Iosifidis:2020} and references therein). The new aspect here is the realisation that there is a truss behind these shifts.
\end{remark}

\section{Concluding Remarks}
In this note, we have presented the heap structure on the set of linear connections on a Lie algebroid.  Some preliminary consequences, such as the torsion and curvature of a triple product of connections, and we give the construction of the truss associated with endomorphisms of the set of linear connections. \par   
We remark that the results of this note extend verbatim to the algebraic setting of (left) connections on anchored modules and Lie--Rinehart pairs over associative supercommutative, unital superalgebras. Indeed, we have avoided using local descriptions in any calculations. With a little effort, we expect the results presented here to generalise to the setting of almost commutative Lie algebroids (see \cite{Ngakeu:2017}).

\section*{Acknowledgements} 
The author thanks Tomasz Brzeziński for comments on earlier drafts of this note.

%

%

\end{document}